\newcommand \N {\mathbb{N}}
\newcommand \R {\mathbb{R}}
\newcommand \Oh {\mathcal{O}}
\newcommand \la {\langle}
\newcommand \ra {\rangle}
\newcommand \wt {\widetilde}
\newcommand \D {\partial}
\newcommand \eps {\varepsilon}
\newcommand \one {\mathbbm{1}}
\DeclareMathOperator \im {Im}
\DeclareMathOperator \supp {supp}
\DeclareMathOperator \WF {WF}
\DeclareMathOperator \Op {Op}
\DeclareMathOperator \Id {Id}
\newtheorem{lem}{Lemma}
\title
[Local smoothing for scattering manifolds]
{Local smoothing for scattering manifolds with hyperbolic trapped sets}
\author[Kiril Datchev]
{Kiril Datchev}
\email{datchev@math.berkeley.edu}
\address{Mathematics Department, University of California \\
Evans Hall, Berkeley, CA 94720, USA}
\begin{document}

\begin{abstract}
We prove a resolvent estimate for the Laplace-Beltrami operator on a scattering manifold with a hyperbolic trapped set, and as a corollary deduce local smoothing. We use a result of Nonnenmacher-Zworski to provide an estimate near the trapped region, a result of Burq and Cardoso-Vodev to provide an estimate near infinity, and the microlocal calculus on scattering manifolds to combine the two.
\end{abstract}

\maketitle
\vspace{-1cm}
\section{Introduction}\label{in}

In this paper, we prove local smoothing and a resolvent estimate for the Laplace-Beltrami operator on a scattering manifold with a hyperbolic trapped set. We exploit the fact that the resolvent estimate of Nonnenmacher-Zworski \cite{nz} in the case where a complex absorbing potential is added does not require an analyticity assumption near infinity, because it does not use the method of complex scaling. To remove the complex absorbing potential from the resolvent estimate, we use a result of Burq \cite{b}, in a more refined form obtained by Cardoso-Vodev \cite{cv}, which estimates a resolvent away from its trapped set. Our setting is the class of scattering manifolds introduced by Melrose in \cite{m}, which we study from the point of view of Vasy-Zworski \cite{vz}, from whom we take an escape function construction and a positive commutator argument.

Our main result, from which the local smoothing follows, is the following resolvent estimate (we defer definitions to section 2):

\textbf{Theorem.} \textit{Let $X$ be a scattering manifold, and let $-\Delta_g$ be the nonnegative Laplace-Beltrami operator on $X$. Let $d$ be the distance function induced by the metric on $X$, and let $y_0$ be a point in the interior of $X$. Suppose that the trapped set of the unit speed geodesic flow, $K \subset T^*X^\circ$, is compact, and that the flow is hyperbolic and with topological pressure which obeys $P(1/2) < 0$ on $K$. Then for any $\beta_0>0$ and $z_0 > 0$, there exists $C \in \R$ such that, for $z \ge z_0$, $0 < \beta \le \beta_0$,}
\begin{equation}\label{theo}\left\|\la d(y,y_0)\ra ^{-\frac 12 - \eps}(-\Delta_g - z -i\beta)^{-1}\la d(y,y_0)\ra ^{-\frac 12 - \eps}\right\|_{L^2(X) \to L^2(X)} \le C \frac{|\log z|}{\sqrt z}.\end{equation}

The hypothesis on the trapped set allows us to apply the results of \cite{nz}. The toplogical pressure is the pressure of the flow on $K$ with respect to the unstable Jacobian, that is to say with respect to the Jacobian of the flow map restricted to the unstable manifold. The bound on the pressure implies that the trapped set is `thin' in a suitable sense. For example, if $\dim X = 2$, it is sufficient to have $\dim K \le 2$. If $X$ is a scattering manifold which has constant negative curvature everywhere outside a sufficiently small neighborhood of infinity, it is sufficient to have $\dim K \le \dim X - 1$. See \cite[Section 3.3]{nz} for more details.

Observe that as a result of the limiting absorption principle (see \cite[Proposition 14]{m}), the limit $\beta \to 0$ of the resolvent exists, and satifies the same estimate:

\[\left\|\la d(y,y_0)\ra ^{-\frac 12 - \eps}(-\Delta_g - z -i0)^{-1}\la d(y,y_0)\ra ^{-\frac 12 - \eps}\right\|_{L^2(X) \to L^2(X)} \le C \frac{|\log z|}{\sqrt z}.\]

We will use a semiclassical approach to this theorem: after a rescaling given by $z = \lambda/ h^2$, the bound $\log z/\sqrt z$ becomes $\log(1/h)/h$. In fact, the crucial result for us will be
\begin{equation}\label{est}
\left\|x^{\frac 12 + \eps}(-h^2\Delta_g - \lambda -i\beta)^{-1}x^{\frac 12 + \eps}\right\|_{L^2(X) \to H_h^2(X)} \le C \frac {\log(1/h)}h,
\end{equation}
for $\lambda > 0$, $\beta_0 > 0$ and $h_0 > 0$ fixed, and for $\beta \in (0,\beta_0)$, $h \in (0,h_0)$. The statement for arbitrary $z_0$ and $\beta_0$ follows from the resolvent identity. Here $x$ is a boundary defining function on $X$, and we will use this in place of $\la d(y,y_0)\ra^{-1}$, which is an example of such a function. Throughout this paper $C$ denotes a constant, which may change from line to line, but which is uniform in $h$ and $\beta$. The same holds for the implicit constants when $\Oh$ notation is used.

From \eqref{est} we will deduce the following local smoothing inequality:
\begin{equation}\label{smooth}\int_0^T \left\|x^{\frac 12 + \eps} e^{it \Delta_g}u\right\|^2_{H^{\frac 12 - \eta}(X)} dt \le C_{\eta,T} \|u\|^2_{L^2(X)},\qquad \eta > 0.\end{equation}

Work by Sj\"olin \cite{s}, Vega \cite{v}, and Constantin-Saut \cite{cs} established this local smoothing estimate with $\eta = 0$ in the case $X = \R^n$. Doi \cite{d} showed that in a wide variety of geometric settings the absence of trapped geodesics is a necessary condition for \eqref{smooth} to hold with $\eta = 0$. Burq \cite{b2} proved \eqref{smooth} for $\eta > 0$ in the case of a trapped set arising from several convex obstacles satisfying certain hyperbolicity assumptions. Christianson \cite{c} proved \eqref{smooth} for $\eta >0$ in the case of a manifold which is Euclidean outside of a compact set, with the same trapping assumptions as in the present paper; the novelty in our result lies in the fact that our assumptions at infinity are weaker.

Andr\'as Vasy has recently suggested a possible direct approach to this result, replacing \cite[(1.5)]{cv} by propagator estimates for the resolvent in the spirit of Section \ref{incoming}.

\section{Preliminaries}

Let $X$ be a compact $C^\infty$ manifold with boundary, and let $x$ be a boundary defining function, that is to say $x \in C^\infty(X;[0,\infty))$ with $x^{-1}(0) = \D X$ and $x = 0 \Rightarrow dx \ne 0$. We use $X^\circ$ to denote the interior of $X$ and say $X$ is a \textit{scattering manifold} if $X^\circ$ is equipped with a metric which takes the following form near $\D X$:
\begin{equation}\label{metric} \frac {dx^2}{x^4} + \frac {h'}{x^2}, \qquad h'|_{\D X} \textrm{ is a metric on }\D X. \end{equation}
Such a metric blows up at $\D X$, and hence cannot be extended to all of $T X$. We accordingly define the scattering tangent bundle, $^{\textrm{sc}}TX$, to be the bundle of vector fields given by $x V_b(X)$, where $V_b(X)$ denotes the space of vector fields tangent to $\D X$, and observe that our metric extends to $^{\textrm{sc}}TX$. The scattering cotangent bundle, $^{\textrm{sc}}T^*X$, is defined to be the dual of $^{\textrm{sc}}TX$. In a collar neighborhood of the boundary, we use coordinates $(x,y)$ on $X$, and $(x,y,\xi,\eta)$ on $T^*X$, and these give rise to `semi-global coordinates',
\[(x,y,\tau,\mu) = (x,y,x^2\xi,x\eta)\]
on $^{\textrm{sc}}T^*X$, coming from the identification
\[\tau \frac {dx} {x^2} + \mu \frac {dy} x = \xi dx + \eta dy.\]
Because the vector fields in $^{\textrm{sc}}TX$ vanish to order $x^2$ in $\D_x$ and to order $x$ in $\D_y$, a corresponding dual growth is permitted in the differential forms of $^{\textrm{sc}}T^*X$.

An important example of this type of manifold is the case where $X$ is a cone near the boundary, i.e. is isometric near infinity to $\D X \times (R,\infty)$ with a metric of the form
\begin{equation}\label{cone} d r^2 + r^2 h', \qquad h'|_{\D X} \textrm{ is a metric on }\D X.  \end{equation}
In this case $r^{-1}$ serves as a boundary defining function in this region, and we see that the above definition agrees with \eqref{metric} under the identification $r^{-1} = x$, as is shown by the computation $dr = d(x^{-1}) = -x^{-2}dx$. We also have
\[\tau \frac {dx} {x^2} + \mu \frac {dy} x = - \tau dr + \mu r dy,\]
which allows us to interpret $-\tau$ as the dual variable to $r$. In the case where $X^\circ = \R^n$, we may take $X$ to be a closed $n$-dimensional hemisphere obtained by radial compactification. The Euclidean metric on $\R^n$ in polar coordinates now takes the form \eqref{cone} near $\D X$, where $h'$ is the round metric on $\mathbb{S}^{n-1} = \D X$.

A function $p \in C^\infty(T^*X^\circ)$ is said to have flow which is \textit{nontrapping} near energy $\lambda$ if there exists $\delta > 0$ such that, for any $\zeta \in T^*X^\circ$ with $\lambda - \delta < p(\zeta) < \lambda + \delta$, we have
\[ \lim_{t \to \infty} x\left[\exp(tH_p)(\zeta)\right] = 0 \qquad \textrm{ and } \qquad \lim_{t \to -\infty} x\left[\exp(tH_p)(\zeta)\right] = 0.\]
Later on we will occasionally use $a(t)$ as shorthand for $a\left[\exp(tH_p)(\zeta)\right]$.

The following lemma gives the fundamental example of a nontrapping flow on a scattering manifold, and is essentially to be found in \cite{m}.

\begin{lem}\label{nontrap}
The symbol of the Laplacian, $|\zeta|^2 = \tau^2 + h'(\mu,\mu)$, has nontrapping flow near $\D X$ at all energies (here $h'$ is a bilinear form which depends on $(x,y)$ and which is evaluated at $(\mu,\mu)$). More precisely, for all $\lambda$ there exists $x_0$ such that if $\zeta_0 \in T^*X^\circ$ satisfies $x(\zeta_0) < x_0$, then either
\[\lim_{t \to \infty}\exp(tH_{|\zeta|^2})(\zeta_0) = 0 \qquad \textrm{or} \qquad \lim_{t \to -\infty}\exp(tH_{|\zeta|^2})(\zeta_0) = 0.\]
\end{lem}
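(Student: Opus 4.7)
The plan is to compute the Hamilton vector field $H_p$ of $p = \tau^2 + h'(\mu, \mu)$ in the scattering coordinates $(x, y, \tau, \mu)$ and to use the coordinate $\tau$ as an escape function. A direct change-of-variables computation, using $\tau = x^2 \xi$, $\mu = x \eta$ and the symplectic form $d\xi \wedge dx + d\eta \wedge dy$, gives
\[
H_p x = 2 x^2 \tau, \qquad H_p \tau = -x\bigl(2 h'(\mu, \mu) + x (\partial_x h')(\mu, \mu)\bigr),
\]
together with $H_p y^k = 2 x\, h'^{jk}\mu_j$ and $H_p \mu_k = x\bigl(2\tau\mu_k - (\partial_{y^k}h')(\mu,\mu)\bigr)$. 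In particular the rescaled field $V := x^{-1} H_p$ extends smoothly to $\partial X$, and on the energy surface $\{p = \lambda\}$, $h'(\mu,\mu) = \lambda - \tau^2$ so
\[
V\tau = -2(\lambda - \tau^2) + O(x).
\]

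Next I would use this as a Lyapunov function. Fix $\lambda > 0$ and choose $x_0 > 0$ so small that, for $x < x_0$, the $O(x)$ error above is dominated by the leading term whenever $|\tau| \le \sqrt{\lambda} - \delta$ for some $\delta > 0$; then $\tau$ strictly decreases along every trajectory in $\{x < x_0\}$ except at the radial sets $\{\tau = \pm\sqrt{\lambda},\ \mu = 0\}$. Given $\zeta_0$ with $x(\zeta_0) < x_0$ and $p(\zeta_0) = \lambda$, I split on the sign of $\tau_0 := \tau(\zeta_0)$. If $\tau_0 > 0$, reverse time: $\tau(t)$ is nondecreasing as $t \to -\infty$ and bounded above by $\sqrt{\lambda}$, while $\dot x = 2 x^2 \tau \ge 2 x^2 \tau_0 > 0$ forces $x(t)$ to decrease monotonically as $t$ decreases. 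The limit $x_{-\infty} := \lim_{t \to -\infty} x(t) \in [0, x(\zeta_0))$ exists, and if $x_{-\infty} > 0$ then $\dot x(t) \ge 2 x_{-\infty}^2 \tau_0$ uniformly for $t \le 0$, which is incompatible with $x(t)$ being bounded as $t \to -\infty$. Hence $x_{-\infty} = 0$. The case $\tau_0 < 0$ is symmetric and gives $x(t) \to 0$ as $t \to +\infty$. When $\tau_0 = 0$, the positivity $p(\zeta_0) = \lambda > 0$ forces $h'(\mu(\zeta_0),\mu(\zeta_0)) = \lambda > 0$, hence $\dot\tau(0) < 0$, so $\tau$ becomes strictly negative immediately and we reduce to the second case.

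The main obstacle is really just arranging that the trajectory stays in the collar $\{x < x_0\}$ throughout the portion of the flow in question, so that the monotonicity of $\tau$ (which only holds there) actually applies. In the cases $\tau_0 \neq 0$ this is automatic, since $x$ is monotone in the appropriate time direction; in the borderline case $\tau_0 = 0$, a short continuity argument handles it once $x_0$ is chosen sufficiently small. This computation and dynamical picture are exactly the near-infinity half of Melrose's analysis of the geodesic flow on scattering manifolds, which is why the paper attributes the lemma essentially to \cite{m}.
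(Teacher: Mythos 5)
Your proof is correct and takes essentially the same approach as the paper's: compute $H_p$ in the scattering coordinates, exploit the near-monotonicity of $\tau$, split on the sign of $\tau(\zeta_0)$, and push $x$ to zero via $\dot x = 2\tau x^2$ (the paper works with the slightly stronger escape function $x^{-1}\tau$, showing $\tfrac{d}{dt}(x^{-1}\tau)\le -|\zeta|^2$, but for this lemma the two are interchangeable). Incidentally your sign in $H_p\tau = -x\bigl(2h'(\mu,\mu)+x\,\D_xh'(\mu,\mu)\bigr)$ is the correct one: the paper's displayed $H_{|\zeta|^2}$ carries a sign typo in the $x^2\D_xh'\,\D_\tau$ term, which is harmless there since only the magnitude of that subleading term is ever used.
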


\begin{proof} To see this, we must first study the flow of $|\zeta|^2$ by computing its Hamiltonian vector field, a computation which we adapt from \cite[p. 19]{m}:
\begin{align}\label{ham}
H_{|\zeta|^2} &= \D_\xi |\zeta|^2 \D_x - \D_x |\zeta|^2 \D_\xi + (\D_\eta|\zeta|^2) \cdot \D_y - (\D_y|\zeta|^2)\cdot \D_\eta. \notag
\intertext{We use $\D_\xi = x^2 \D_\tau$, $\D_\eta = x \D_\mu$ and ``$\D_x = \D_x + x^{-1}\mu \cdot \D_\mu + 2\tau x^{-1}\D_\tau$'', where in the last formula the left hand side refers to $(x,y,\xi,\eta)$ coordinates, and the right hand side to $(x,y,\tau,\mu)$ coordinates. This gives}
H_{|\zeta|^2} &= x^2 \D_\tau |\zeta|^2(\D_x + x^{-1} \mu \cdot \D_\mu + 2\tau x^{-1}\D_\tau) \notag\\
&\quad- x \left[\left(x\D_x + \mu \cdot \D_\mu + 2\tau \D_\tau\right)|\zeta|^2\right]\D_\tau + x(\D_\mu|\zeta|^2) \cdot \D_y - x(\D_y|\zeta|^2)\cdot \D_\mu. \notag
\intertext{We cancel the $\D_\tau(|\zeta|^2)2\tau x\D_\tau$ terms, write $H_{h'} = (\D_\mu|\zeta|^2) \cdot \D_y - (\D_y|\zeta|^2)\cdot \D_\mu$, substitute $|\zeta|^2 = \tau^2 + h'(\mu,\mu)$, and use $\mu \cdot \D_\mu h'(\mu,\mu) = 2h'(\mu,\mu)$. Now}
H_{|\zeta|^2} &= 2\tau x^2\D_x + 2\tau x\mu \cdot \D_\mu - (2x h'(\mu,\mu) - x^2 \D_x h'(\mu,\mu))\D_\tau + xH_{h'}.
\end{align}
We now observe from this that, along flowlines of $H_{|\zeta|^2}$, we have $\frac d {dt} x = 2 \tau x^2$ and $\frac d {dt} \tau = -2xh'(\mu,\mu) + x^2 \D_x h'(\mu,\mu)$. This allows us to compute
\[\frac d {dt} x^{-1} \tau =  \tau \frac d {dt} x^{-1} + x^{-1} \frac d {dt} \tau = -2 \tau^2 - 2 h'(\mu,\mu) + x \D_x h'(\mu,\mu).\]
The function $h'(\mu,\mu)$ is smooth up to $\D X$, and hence by taking $x$ small we can make $x \D_x h'(\mu,\mu)$ arbitrarily small. In other words,
\[ \frac d {dt} x^{-1} \tau \le -\tau^2 - h'(\mu,\mu) = -|\zeta|^2, \qquad x \textrm{ sufficiently small.} \]
If we now restrict ourselves to $|\zeta|^2 \in (\lambda - \delta, \lambda + \delta)$, we have
\[\frac d {dt} x^{-1} \tau \le - \lambda + \delta, \]
and as a result
\[ x^{-1}(t)\tau(t) \stackrel{t \to \infty}{\longrightarrow} \, -\infty,\]
provided the trajectory remains in the part of $X$ where these coordinates are defined. If the initial condition has $\tau(0) \le 0$, then by $\frac d {dt} x = 2 \tau x^2$ we see that $x$ is decreasing, and it must approach zero because the conservation of $p$ implies that $\tau$ is bounded. In the case $\tau(0) \ge 0$, the same calculation gives the result as $t \to -\infty$.
\end{proof}

The bundle $^{\textrm{sc}}T^*X$ will be our phase space, and we will use the microlocal calculus developed in \cite{m}, in \cite{wz}, and in \cite{vz}. In particular we use semiclassical Sobolev spaces associated to our scattering metric. We denote by $\|\cdot\|_{L^2(X)}$ the $L^2$ norm on $X$ with respect to this metric, and then put
\[\|u\|_{H^m_h(X)} = \|(\Id - h^2 \Delta_g)^{m/2} u\|_{L^2(X)}.\]

We use the notation $S^{m,l,k}(X)$ to denote the symbol class of functions $a \in C^\infty((0,1) \times T^*X)$ satisfying $h^kx^{-l} \sigma^m a \in L^\infty((0,1) \times T^*X)$, and satisfying the same estimate after the application of any $b$-differential operator on the fiber radial compactification of $^{\textrm{sc}}T^*X$. A $b$-differential operator is an element of the algebra generated by the vector fields tangent to the boundary of the fiber radial compactification of $^{\textrm{sc}}T^*X$, and $\sigma$ is a boundary defining function in the fibers of the fiber radial compactification of $^{\textrm{sc}}T^*X$ (this compactification forms a manifold with corners: see \cite[Section 4]{m}). Symbols with higher $l$ have better decay at spatial infinity, while symbols with lower $m$ have better decay at frequency infinity, i.e. have better smoothing properties. The principal symbol corresponding to a symbol $a \in S^{m,l,k}(X)$ is defined to be the equivalence class of $a$ in $S^{m,l,k}(X)/S^{m-1,l+1,k-1}(X)$.

These symbols can be quantized in the case where $X = \overline {\R^n}$, the radial compactification of $\R^n$ discussed above, using the following quantization formula:
\begin{equation}\label{quant}\Op(a)u(z) = \left(\frac 1 {2\pi h} \right)^n \int e^{i(z-w)\cdot \xi/h}a\left(h,z,\xi\right)u(w)dwd\xi.\end{equation}
A pseudodifferential operator $A \in \Psi^{m,l,k}(\overline{\R^n})$ is one which is obtained by \eqref{quant} from a symbol $a \in S^{m,l,k}(\overline{\R^n})$. This definition can be extended by localization to a general $X$: the necessary invariance under changes of coordinates is proved in \cite[Proposition A.4]{wz}, following \cite{sch}. We quantize a total symbol $a$ by using \eqref{quant} in local coordinates together with a fixed partition of unity, but bear in mind that only the principal symbol is invariantly defined. We say that $A \in \Psi^{m,l,0}$ is \textit{elliptic} on a set $K \subset {}^{\textrm{sc}}T^*X$ if $a$, the principal symbol of $A$, satisfies $|a| \ge c x^l\sigma^{-m}$ on $K$. The map associating a principal symbol to a pseudodifferential operator obeys the standard properties of being commutative to top order, and of taking a commutator to a Poisson braket (see \cite[(2.1)]{vz}). More precisely, given $A \in \Psi^{m,l,k}(X)$ and $B \in \Psi^{m',l',k'}(X)$, we have $[A,B] \in \Psi^{m+m'-1,l+l'+1,k+k'-1}(X)$ with symbol $\frac h i H_ab$.

For $X=\overline{\R^n}$, we define the wavefront set of a pseudodifferential operator $A=\Op(a)$, denoted $\WF_hA$, as follows. For a point $\zeta \in {}^{\textrm{sc}}T^*X^\circ$, we say $\zeta \not\in \WF_hA$ if, in a neighborhood of $\zeta$, $|\D^\alpha a| = \Oh(h^\infty)$ for any multiindex $\alpha$. For a point $\zeta \in \D{}^{\textrm{sc}}\overline{T}^*X$, we say that $\zeta \not\in \WF_hA$ if, in a neighborhood of $\zeta$, $|\D^\alpha a| = x^\infty\sigma^\infty\Oh(h^\infty)$ for any multiindex $\alpha$. Here ${}^{\textrm{sc}}\overline{T}^*X$ is the fiberwise radial compactification of ${}^{\textrm{sc}}T^*X$, and $\sigma$ is again the fiber boundary defining function. That this notion is invariant under coordinate change follows, for example, from \cite[(8.43)]{ez}, and as a result the definition can be extended to any scattering manifold $X$. What will be important for us is that the wavefront set of a product is the intersection of the wavefront sets: i.e. if $A \in \Psi^{m,l,0}(X)$ and $B \in \Psi^{m',l',0}(X)$, then
\begin{equation}\label{comp}\WF_h(AB) \subset \WF_hA \cap \WF_h B.\end{equation}
This containment can be deduced in $\R^n$ from the composition formula \cite[(4.22)]{ez}. The fact that the wavefront set is an invariant feature of a pseudodifferential operator allows the result to be extended to a general scattering manifold $X$.

The wavefront set allows us to define a notion of local invertibility for the region where a pseudodifferential operator is elliptic: Let $A \in \Psi^{m,l,0}(X)$ be elliptic on $K \subset {}^{\textrm{sc}}\overline{T}^*X$. Then there exists $A'\in\Psi^{-m,-l,0}(X)$ such that
\begin{equation}\label{ellip}K \cap \WF_h(A'A - \Id) = \varnothing \quad \textrm{and} \quad K \cap \WF_h(AA' - \Id) = \varnothing.\end{equation}
Indeed, let $a$ be the principal symbol of $A$, and suppose $|a| \ge c x^l\sigma^{-m}$ on $K$. Let $A'_0 = \Op(\chi a^{-1})$, where $\chi \in C^\infty({}^{\textrm{sc}}\overline{T}^*X)$, $\chi \equiv 1$ on $K$, and $|a| \ge \frac c 2 x^l\sigma^{-m}$ on $\supp \chi$. Now the principal symbol of $A'_0A - \Id$ vanishes on $K$, so we have $A'_0A - \Id = R_0$, where $BR_0 \in \Psi^{-1,1,1}(X)$ for any $B\in\Psi^{0,0,0}(X)$ with $\WF_hB \subset K$. Let $r_0$ be the principal symbol of $R_0$. Then put $A'_1 = -\Op(\chi r_0 a^{-1})$. Now $B(A'_0 + A'_1)A - \Id \in \Psi^{-2,2,2}(X)$ for any $B \in \Psi^{0,0,0}$ with $\WF_hB \subset K$. An iteration of this procedure followed by a Borel asymptotic summation gives us
\[ \overline{A}' \sim A'_0 + A'_1 + \cdots \]
with $\overline{A}'\in\Psi^{-m,-l,0}(X)$ satisfying the first half of \eqref{ellip}. Similarly we may produce $\wt{A}'\in\Psi^{-m,-l,0}(X)$ satisfying the second half of \eqref{ellip}. But
\[\overline{A}' - \wt{A}' = \overline{A}'A(\overline{A}' - \wt{A}')A\wt{A}' + \Oh_K(h^\infty) = \Oh_K(h^\infty),\]
where $\Oh_K(h^\infty)$ denotes a psuedodifferential operator whose waverfront set does not intersect $K$. Hence we may arrange $A' = \overline{A}' = \wt{A}'$, and we have achieved \eqref{ellip}.

We will also define the semiclassical wavefront set for a function $u \in C^{\infty}(X^\circ)$ which is \textit{h-tempered}, namely which satisfies $\|x^Nu\|_{L^2(X)} \le Ch^{-N}$ for some $N \in \N$. We say that a point $\zeta \in {}^{\textrm{sc}}T^*X$ is in the complement of $\WF_hu$ if there exist $m,l \in \R$ and $A_0 \in \Psi^{m,l,0}(X)$ such that $A_0$ is elliptic at $\zeta$ and
\begin{equation}\label{wf}\|A_0u\|_{L^2(X)} = \Oh(h^\infty). \end{equation}
In analogy to \eqref{comp} we have, for any $A \in \Psi^{m,l,0}(X)$,
\begin{equation}\label{disj} \WF_h(Au) \subset \WF_hA \cap \WF_hu.\end{equation}
Indeed, if $\zeta \not\in\WF_hA$, then we may take $A_0$ with $\WF_hA_0 \cap \WF_hA = \varnothing$, so that $\WF_h(A_0A) = \varnothing$, and such an operator is $\Oh_{L^2(X) \to L^2(X)}(h^\infty)$ by definition. If, on the other hand, $\zeta \not\in \WF_hu$, then we take $A_0$ as in \eqref{wf}. By ellipticity, there exists $B \in \Psi^{-m,-l,0}(X)$ such that $I = BA_0 + R$ with $\zeta \not\in \WF_h R$. Then $Au = ABA_0u + ARu$. The first term is $\Oh(h^\infty)$, and the second has $\zeta \not\in \WF_h(ARu)$ because $\WF_h(ARu) \subset \WF_h(AR) \subset \WF_h(R)$.

Similarly, if $\zeta \not\in \WF_hu$ and if $A \in \Psi^{m,l,0}(X)$ has $\WF_hA$ contained in a sufficiently small neighborhood of $\zeta$, then
\begin{equation}\label{wflocal} \|Au\|_{L^2(X)} = \Oh(h^\infty). \end{equation}
Indeed, again consider $Au = ABA_0u + ARu$. The first term is already $\Oh(h^\infty)$, and the second will be provided $\WF_h(A) \cap \WF_h(R) = \varnothing$.

Finally
\begin{equation}\label{empty} \WF_hu = \varnothing \quad\Longrightarrow\quad \|x^{-N}u\|_{L^2(X)} = \Oh(h^\infty),\quad\forall N\in\N.\end{equation}
This can be shown by using \eqref{wflocal} and a partition of unity to construct a globally elliptic operator $A$ such that $\|Au\|_{L^2(X)} = \Oh(h^\infty)$.

We emphasize that when $u$ depends on a parameter $\beta$, the implicit constants in \eqref{wf}, \eqref{wflocal} and \eqref{empty} are uniform in $\beta$.

\section{An incoming resolvent estimate} \label{incoming}

We prove here a lemma concerning solutions to the equation $(P - \lambda - i\beta)u = f$, where the principal symbol of $P$ has nontrapping flow at $\lambda$.
We claim that $\WF_h u$ is contained in the forward-in-time bicharacteristics originating in $\WF_h f$. The proof is based on the construction and estimates of \cite{vz}.

\begin{lem} \label{inc} Let $P$ be a self-adjoint operator in $\Psi^{2,0,0}(X)$ whose principal symbol is real and has nontrapping Hamiltonian flow at energy $\lambda$, and suppose $P = -h^2 \Delta_g$ outside of a compact subset of $X^\circ$. Let $f \in C_0^\infty(X^\circ)$ with $\|f\|_{L^2(X)} = 1$, and suppose $u$ solves
\[(P - \lambda - i\beta)u = f.\]
Let $p$ be the principal symbol of $P$. Then, for $h$ sufficiently small and for all $\beta>0$, $\WF_h u \cap T^*X^\circ$ is contained in
\[\left(\bigcup_{t \ge 0} \exp(tH_p)(\WF_h f) \right)\cap \left(p^{-1}(\lambda) \cup \WF_h f\right)\]
\end{lem}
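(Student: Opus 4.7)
The plan is to prove the two containments separately: elliptic regularity gives $\WF_h u \cap T^*X^\circ \subset p^{-1}(\lambda) \cup \WF_h f$, and a propagation of singularities argument in the scattering calculus of \cite{vz}, with the sign of $\beta$ selecting the forward direction, gives $\WF_h u \cap T^*X^\circ \subset \bigcup_{t \ge 0} \exp(tH_p)(\WF_h f)$.

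For the elliptic part, if $\zeta \in T^*X^\circ$ has $p(\zeta) \neq \lambda$, then $P - \lambda - i\beta$ has principal symbol $p - \lambda$ which is nonvanishing at $\zeta$, uniformly in $\beta \in [0, \beta_0]$. The parametrix construction \eqref{ellip}, together with \eqref{disj} and \eqref{wflocal}, yields $\zeta \notin \WF_h u$ whenever $\zeta \notin \WF_h f$. For the propagation part, fix $\zeta \in \WF_h u \cap T^*X^\circ$; by the elliptic step we may assume $\zeta \in p^{-1}(\lambda)$ (otherwise $\zeta \in \WF_h f$ and we take $t = 0$). Suppose for contradiction that the backward bicharacteristic $\gamma(-t) := \exp(-tH_p)\zeta$ avoids $\WF_h f$ for every $t \ge 0$. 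By the nontrapping hypothesis at energy $\lambda$, $x(\gamma(-t)) \to 0$ as $t \to \infty$, so $\gamma(-t)$ eventually enters any prescribed neighborhood, in ${}^{\textrm{sc}}T^*X$, of the incoming radial set $R_- = \{x = 0,\, \mu = 0,\, \tau > 0,\, \tau^2 = \lambda\}$.

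The core step is a positive commutator estimate along the arc $\gamma|_{[-T,0]}$ together with a neighborhood of $R_-$. Following the escape function construction of \cite{vz}, choose a self-adjoint $A \in \Psi^{0,0,0}(X)$ with symbol $a \ge 0$ supported in a tube around this set, disjoint from $\WF_h f$, and arranged so that $H_p a \le -b^2 + e$ with $b$ elliptic at $\zeta$ and $e$ encoding the (controlled) behavior near $R_-$. Using $Pu = (\lambda + i\beta)u + f$ and self-adjointness of $P$, a direct computation gives
\[
-\tfrac{i}{h}\la [P, A]u, u\ra \;=\; -\tfrac{2\beta}{h}\la Au, u\ra \;-\; \tfrac{2}{h}\im \la Af, u\ra,
\]
in which the first term on the right has the favorable sign ($A \ge 0$, $\beta \ge 0$) and can be discarded, while the second is $\Oh(h^\infty)$ after Cauchy--Schwarz since $\WF_h A \cap \WF_h f = \varnothing$ and $f$ is compactly supported in $X^\circ$. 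Combining with the principal symbol identity $\sigma(\tfrac{i}{h}[P, A]) = H_p a$ and sharp G\aa rding, one obtains $\|Bu\|_{L^2(X)} = \Oh(h^\infty)$, hence $\zeta \notin \WF_h u$, a contradiction.

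The main obstacle is the construction of $a$ with the right radial-point behavior at $R_-$ in the scattering calculus --- a delicate choice of weights needed because $R_-$ is a degenerate locus of the Hamiltonian flow at infinity --- together with ensuring uniformity of all estimates in $\beta \in (0, \beta_0]$. Both are addressed by the construction of \cite{vz}; the $\beta$-uniformity essentially comes for free because the $\beta$-dependent term in the commutator identity has the favorable sign and is simply discarded, rather than estimated in terms of $\|u\|$ (which itself is not $\beta$-uniformly bounded).
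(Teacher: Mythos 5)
Your overall plan matches the paper's: Step 1 elliptic regularity, Step 2 a geometric observation about the backward bicharacteristic, Step 3 an escape function supported along the backward trajectory and reaching a neighborhood of the incoming radial set, Step 4 a positive commutator argument exploiting $\beta > 0$. However, there are several genuine gaps in the proposal as written.

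\textbf{(1) A single commutator estimate does not give $\Oh(h^\infty)$.} Even with a correctly constructed escape function, one pass through the positive commutator argument gives a bound of the form $\|Bu\|_{L^2}^2 \le Ch\|u\|^2_{*} + \Oh(h^\infty)$, where $\|\cdot\|_{*}$ is a suitable weighted norm; the $Ch$ term comes from the remainder in the symbol calculus (sharp G{\aa}rding, subprincipal terms). This is $\Oh(h)$, not $\Oh(h^\infty)$. The paper handles this by constructing a \emph{nested family} of escape functions $q_j$, cutoffs $\chi_j, \rho_j, \psi_j, \phi_j, \wt\chi_j$, and operators $A_j, B_j$, with $b_{j+1}$ elliptic on $\supp b_j$, and iterating \eqref{iter} indefinitely. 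Your proposal produces one escape function $a$ and stops; without the iteration, the conclusion $\|Bu\| = \Oh(h^\infty)$ does not follow.

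\textbf{(2) The sign of the $\beta$-term is not free.} You claim the term $-\frac{2\beta}{h}\la Au, u\ra$ can be discarded because ``$A \ge 0$'', but $a \ge 0$ at the symbol level does not imply $A = \Op(a) \ge 0$ as an operator; sharp G{\aa}rding only gives $A \ge -Ch$, and the resulting error $C\beta h\|u\|^2$ is \emph{not} uniform in $\beta$ since $\|u\| \lesssim 1/\beta$. The paper avoids this by using $Q_j^*Q_j$ in place of $A$, so that $\beta\|Q_j A_j u\|^2 \ge 0$ exactly, with no G{\aa}rding loss. This substitution is essential, not cosmetic.

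\textbf{(3) The error $e$ near the radial set must vanish.} You write $H_p a \le -b^2 + e$ with $e$ ``encoding the controlled behavior near $R_-$'', but an error of unfavorable sign near $R_-$ would feed back into the commutator and cannot be absorbed. The paper's escape function is built so that $H_p q_j^2 = -b_j^2$ \emph{exactly}, with no positive error anywhere. This is exactly what the geometric Step 2 (showing $\tau(\exp(tH_p)\zeta) > 2\sqrt\lambda/3$ for $t$ large negative) buys: the boundary piece $q_{j,1} = x^{-\eps}\chi_j(\tau)\rho_j(x)\psi_j(p)$ then has $H_p q_{j,1} \le 0$ everywhere because the leading term $-2\eps\tau x^{1-\eps}\chi_j(\tau)\rho_j(x)\psi_j(p)$ has the right sign on the support of $\chi_j$. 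Your proposal identifies that the trajectory enters a neighborhood of $R_-$ but does not extract the quantitative lower bound on $\tau$ that makes the escape function exact.

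\textbf{(4) $\beta$-uniformity does not come for free.} The term $\frac{2}{h}\im\la Af, u\ra$ is $\Oh(h^\infty)$ only after pairing $x^{-N}Af$ (which is $\Oh(h^\infty)$ since $\WF_h A \cap \WF_h f = \varnothing$ and $f$ is compactly supported) against $x^N u$, which is $\beta$-uniformly bounded by the a priori estimate $\|x^{1/2+\eps}u\| \le Ch^{-1}$ from \cite[(1.1)]{vz}. Without invoking this weighted bound, one is left with $\|u\| \le 1/\beta$ and the estimate degenerates as $\beta \to 0$. The same a priori bound is what makes the iteration in \eqref{iter} terminate with $\beta$-uniform constants.
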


\begin{proof} We proceed in four steps:

\textbf{Step 1.}
We observe first that we can use ellipticity to restrict ourselves to $p^{-1}(\lambda) \cup \WF_h f$. Indeed, suppose $a \in C_0^\infty(T^*X^\circ)$, and suppose that $\supp a \cap \left(p^{-1}(\lambda) \cup \WF_h f\right) = \varnothing$. Using the fact that the principal symbol of $P - \lambda - i\beta$ is nonvanishing on $\supp a$, for $h$ sufficiently small construct a local parametrix $P'$ for $P - \lambda - i \beta$ such that $\supp a \cap (\WF_h(P'(P-\lambda-i\beta) - I)$. Now, using the fact that $\supp a \cap \WF_h f = \varnothing$, we have from \eqref{disj} and \eqref{empty} that
\[\Op(a)u = \Op(a)P'(P - \lambda - i \beta)u + \Oh(h^\infty) = \Oh(h^\infty).\]

\textbf{Step 2.}
Now take $\zeta \in T^*X^\circ \cap p^{-1}(\lambda)$ satisfying $\zeta \not\in \left(\bigcup_{t \ge 0} \exp(tH_p)(\WF_h f) \right)$.
We will need the following fact about the bicharacteristic through $\zeta$: Given any $x_0 > 0$, there exists $T > 0$ such that
\begin{equation}\label{tau} t \le -T \,\Rightarrow\, \tau(\exp(tH_p)(\zeta)) > 2\sqrt \lambda/3,\, x(\exp(tH_p)(\zeta)) < x_0/2,\end{equation}
where $\tau$ comes from the coordinates $(x,y,\tau,\mu)$ near $\D X$. Observe that the conclusion concerning $x(\exp(tH_p)(\zeta))$ follows from the nontrapping hypothesis, so we it suffices to prove the conclusion concerning $\tau(\exp(tH_p)(\zeta))$.

From \eqref{ham}, because our symbol agrees with $|\zeta|^2$ near $\D X$, we have
\[ H_p = 2\tau x^2\D_x + 2\tau x\mu \cdot \D_\mu - (2x h'(\mu,\mu) - x^2 \D_x h'(\mu,\mu))\D_\tau + xH_{h'} \quad \textrm{near } \D X.\]

As in the proof of Lemma \ref{nontrap} we have
\[ x^{-1}(t)\tau(t) \to \infty \textrm{ as } t \to -\infty, \qquad x \textrm{ sufficiently small.}\]
Hence $\tau > 0$, so it remains to show that $|\tau| > 2\sqrt \lambda/3$. Conservation of $p = \tau^2 + h'(\mu,\mu)$ implies that
\[|p - \lambda| < \delta_1, |\tau| \le 2 \sqrt \lambda/3 \Rightarrow h' \ge 2 c_1 > 0.\]
But $h'$ is smooth up to $\D X$, so under these assumptions we have $\frac d {dt} \tau = - 2 x h' + x^2 \D_x h' \le - c_1 x$ for $x$ sufficiently small.
Using $\frac d {dt} x = 2 \tau x^2$, for $|p - \lambda| < \delta_1$ we have
\[\log x(t) = \log x(0) - \int_t^0 2\tau x ds \ge \log x(0) - 2\sqrt{\lambda + \delta_1}\int_t^0 x ds.\]
When $x(0)$ is sufficiently small we thus obtain
\[\tau(t) = \tau(0) - \int_t^0 \frac d {ds} \tau ds \ge \tau(0) + c_1 \int_t^0 x ds \ge \tau(0) + c_1 \frac{\log x(0) - \log x(t)}{2\sqrt{\lambda + \delta_1}}.\]
As $t \to -\infty$, we have $x(t) \to 0$, and hence the right hand side increases without bound. This means that eventually $h' < c_1$, and so $\tau(-t) > 2\sqrt\lambda/3$ and we have \eqref{tau}. 

\textbf{Step 3.}
We will construct a nested family of escape functions which are positive near $\zeta$. More precisely, for $j \in \N$, we construct $q_j \in S^{-\infty,-\eps,0}(X)$, $q_j \ge 0$ everywhere, $\supp q_j \cap \WF_h f = \varnothing$, satisfying:
\[ H_p q_j^2 = -b_j^2, \]
where $b_j \in S^{-\infty,\frac 12 -\eps,0}(X)$, and
\begin{equation}\label{cover} b_1(\overline\zeta) \ge c_1x^{\frac 12 - \eps} \quad \textrm{on} \quad \overline{\bigcup_{t\le0}(\exp(tH_p)(\zeta)}, \qquad b_{j+1} \ge c_j x^{\frac 12 - \eps} \quad \textrm{on} \quad \supp b_j.
\end{equation}

Let $\chi_j \in C^\infty(\R)$ be supported in the interval $(\sqrt \lambda/ 3, \infty)$, identically $1$ on $[2 \sqrt \lambda/ 3, \infty)$, and satisfy $\chi_j' \ge 0$. Suppose further that $\chi_{j+1}\chi_j \equiv \chi_j$. Let $\rho_j \in C_0^\infty ([0,\delta_j))$ be identically $1$ near zero and have $\rho_j'\le 0$, where $\delta_j$ is chosen such that the semi-global coordinates are valid for $x$ in the support of $\rho$, and so that $\rho_j\rho_{j+1} \equiv \rho_j$ while $\inf \delta_j > 0$. Finally take $\psi_j \in C_0^\infty(\R; [0,1]), \psi_j \equiv 1 \textrm{ near } \lambda$, $\supp \psi_j \subset (-\delta + \lambda, \lambda + \delta)$, such that $\psi_{j+1}\psi_j \equiv \psi_j$, and put
\[q_{j,1} = x^{-\eps}\chi_j(\tau) \rho_j(x) \psi_j(p). \]
Now
\begin{align*} H_p q_{j,1} = [-2\eps\tau x^{1-\eps}&\chi_j(\tau) \rho_j(x) + 2x^{2-\eps}\tau \chi_j(\tau) \rho_j'(x) \\
&+ (-2x^{1-\eps}h' + x^{2-\eps}\D_xh'))\chi_j'(\tau) \rho_j(x)]\psi_j(p).\end{align*}
Each term on the right hand side is nonpositive everywhere (for the last term we need to have $\rho_j$ supported in a sufficiently small neighborhood of $0$ to make $|x\D_xh'|$ small), and the first term is negative when $\tau \ge 2 \sqrt \lambda/ 3$, $p \in \psi^{-1}(1)$, $x \in \rho^{-1}(1)$. This $q_{j,1}$ has all the needed properties, except that \eqref{cover} is replaced by
\[ b_1 \ge c_1x^{\frac 12 - \eps} \qquad \textrm{on} \qquad \bigcup_{t\le-T}\exp(tH_p)(\zeta),\qquad b_{j+1} \ge c_j x^{\frac 12 - \eps} \quad \textrm{on} \quad \supp b_j.\]
To complete the construction we put $q_j = q_{j,1} + q_{j,2}$, where $q_{j,2}$ is supported in a tubular neighborhood of $\cup_{-T\le t\le0}\exp(tH_p)(\zeta)$. Indeed, let $U$ be such a tubular neighborhood, taken so small that we can introduce a hypersurface $\Sigma$, transversal to $\cup_{-T\le t\le0}\exp(tH_p)(\zeta)$, such that
\[U = \bigcup_{-T-1\le t \le 1} \exp(tH_p)(U \cap \Sigma).\]
Now let $\phi_j \in C_0^\infty(U\cap\Sigma)$ be identically 1 near $\zeta$ and such that $\phi_j\phi_{j+1} \equiv \phi_j$, and let $\wt\chi_j \in C_0^\infty((-T-1,1))$ satisfy $\wt\chi_j \ge 0$, $\wt\chi_1' < c$ on $[-T,0]$, $\wt\chi_{j+1}' < c$ on $\supp \wt\chi_j$. Now putting $q_2 = \eps_j \phi_j\wt\chi_j\psi(p)$ for $\eps_j$ small enough completes the construction.

\textbf{Step 4}.
The remaining part of the proof is a positive commutator argument, which is the semiclassical adaptation of the proof of \cite[Proposition 3.4.5]{h}. We take $Q_j = \Op(q_j)$, $B_j = \frac 12(\Op(b_j) + \Op(b_j)^*)$, and observe that $H_pq_j^2 = -b_j^2$ implies that
\[ B_j^2  = \frac i h[Q_j^*Q_j,P]+ hx^{1 - \frac\eps2}R_jx^{1-\frac\eps2}, \]
where $R_j \in \Psi^{-\infty,0,0}(X)$. The property \eqref{cover} allows us to construct $A_j \in \Psi^{0,0,0}(X)$ such that $\WF_h(A_j - \Id) \subset \WF_h(B_j)$, while $B_{j+1}$ is elliptic on $\WF_hA_j$. Now, for $\beta > 0$, we have
\begin{align*}
&\|B_ju\|_{L^2(X)}^2 = \la A_ju, B_j^2 A_ju \ra + \Oh(h^\infty) \\
&= \frac ih\la A_ju, [Q_j^*Q_j,P] A_ju \ra + h\la A_ju,x^{1+\frac\eps2}R_jx^{1+\frac\eps2}A_ju\ra + \Oh(h^\infty)\\
&= \frac {-2i}h\left(\im \la u, Q_j^*Q_j(P-\lambda - i\beta)u\ra + \beta\|Q_j A_ju\|_{L^2(X)}^2\right) + h\la A_ju,x^{1+\frac\eps2}R_jx^{1+\frac\eps2}A_ju\ra + \Oh(h^\infty)\\
&\le  Ch\|x^{1+\frac\eps2}A_ju\|_{L^2(X)} + \Oh(h^\infty).
\end{align*}
For the first equality we used $\WF_h B_j \cap \WF_h(A_j-\Id)= \varnothing$. For the inequality we used $\beta\|Q_j A_ju\|_{L^2(X)}^2\ge 0$, $\WF Q_j \cap \WF_h(A_j-\Id)= \varnothing$, and $\WF_h Q_j \cap \WF_h (P-\lambda - i\beta)u = \varnothing$. From \cite[(1.1)]{vz} we know that $x^{\frac 12 + \eps}u \in L^2(X)$ uniformly in $\beta$, so the constants on the right hand side of the inequality are uniform in $\beta$. Next we observe that $B_{j+1}$ is elliptic near $\WF_hA_j$, so we may construct a parametrix, $B'_{j+1} \in \Psi^{-\infty,-\frac 12 + \eps,0}(X)$, such that $\WF_h(B'_{j+1}B_{j+1} - \Id)\cap\WF_hA_j = \varnothing$. This allows us to write
\begin{align}\label{iter}
\|x^{1+\frac\eps2}A_ju\|^2_{L^2(X)} &= \|x^{1+\frac \eps 2}A_jB'_{j+1}B_{j+1}u\|^2_{L^2(X)} + \Oh(h^\infty) \le C\|B_{j+1}u\|^2_{L^2(X)} + \Oh(h^\infty) \notag\\
&\le Ch\|x^{1+\frac\eps2}A_{j+1}u\|^2_{L^2(X)} + \Oh(h^\infty).
\end{align}
We have used the fact that $x^{1+\frac \eps 2}A_jB'_{j+1} \in \Psi^{-\infty,\frac 12 + \frac 32 \eps,0}(X)$ is bounded on $L^2(X)$.

Since \eqref{iter} holds for all $j \in \N$, we find that $\|x^{1+\frac\eps2}A_ju\|_{L^2(X)}^2 = \Oh(h^\infty)$, and since the $x^{1+\frac\eps2}A_j$ are elliptic at $\zeta$ this concludes the proof.
\end{proof}

\section{A preliminary global resolvent estimate}\label{fi}

Put $P = -h^2\Delta$. As a first step we show that
\begin{equation}\label{log2}
\|x^{\frac 12 + \eps}(P- \lambda -i\beta)^{-1}x^{\frac 12 + \eps}\|_{L^2(X) \to H_h^2(X)} \le C \frac {\log^2(1/h)}h.
\end{equation}

To prove this result we will need some auxiliary smooth cutoff functions on $X$. Let $W \in C^\infty(X;[0,1])$ satisfy $W \equiv 1$ in a neighborhood of $\D X$, and for $j \in \{1,2,3\}$, let $\chi_j \in C^\infty(X;[0,1])$ satisfy $\chi_j\chi_{j+1} \equiv \chi_j$ and $\chi_3W \equiv 0$. Suppose further that $\supp(1 - \chi_1)$ is contained in the collar neighborhood of the boundary where we have `semi-global coordinates' $(x,y,\tau,\mu) = (x,y,x^2\xi,x\eta)$ on $^{\textrm{sc}}T^*X$, and that $\chi_a \equiv 1$ on $\pi(K)$, the projection of the trapped set onto $X$. Now from \cite[Proposition 9.2]{nz} we have
\begin{equation}\label{nzest} (P - iW - \lambda - i \beta)u = f \Longrightarrow \|u\|_{H^2_h(X)} \le C \frac {\log(1/h)}h \|f\|_{L^2(X)}. \end{equation}
Further, from \cite[(1.5)]{cv}, we have, for $j \in \{1,2,3\}$
\begin{equation}\label{cvest} (P - \lambda - i\beta)u = (1-\chi_j)f \Longrightarrow \|x^{\frac 12 + \eps}(1-\chi_j)u\|_{H^2_h(X)} \le C \frac 1 h \|x^{-\frac 12 - \eps}f\|_{L^2(X)}.\end{equation}
That the hypotheses of \cite{cv} are satisfied is guaranteed by the normal form of \cite[Proposition 2.1]{js}. As stated in \cite{cv}, the estimate is valid for $\beta$ in an interval smaller than ours, but the stronger statement can be deduced from the weaker one using the resolvent identity.

Take $f \in C^\infty(X^\circ)$ such that $x^{-\frac 12 - \eps}f \in L^2(X)$, and consider $u$ which solves $(P-\lambda - i\beta)u = f$. Our goal is to estimate this $u$, and to do so we will write it as a sum of three functions \eqref{u} which we will estimate individually.
First take $u_0$ such that $(P - iW - \lambda - i\beta)u_0 = \chi_1 f$. We have
\[(P - \lambda - i\beta) \chi_2 u_0 = \chi_2 (P - iW - \lambda - i\beta) u_0 + [P,\chi_2]u_0 = \chi_1 f + [P,\chi_2]u_0.\]
If $(P - \lambda - i\beta)v = (1-\chi_1) f$ and $(P - \lambda - i\beta)u_1 = [P,\chi_2]u_0$, then
\begin{equation}\label{u}u = \chi_2 u_0 + v - u_1.\end{equation}
By \eqref{nzest} we have
\begin{equation}\label{u0} \|\chi_2 u_0\|_{H^2_h(X)} \le C \frac {\log(1/h)}h \|\chi_1 f\|_{L^2(X)}.\end{equation}
By \eqref{cvest} we have
\begin{equation}\label{v1} \|x^{\frac12 + \eps}(1-\chi_1)v\|_{H^2_h(X)} \le C \frac 1 h \|x^{-\frac12 - \eps}(1-\chi_1)f\|_{L^2(X)} \le C \frac 1 h \|x^{-\frac 12 - \eps}f\|_{L^2(X)}.\end{equation}
On the other hand
\[(P - iW - \lambda - i\beta) \chi_2 v = (P - \lambda - i\beta)\chi_2 v = \chi_1 f + [P,\chi_2]v.\]
Now by \eqref{nzest}
\begin{equation}\label{v2}\|\chi_2 v\|_{H^2_h(X)} \le C\frac{\log(1/h)}h(\|\chi_1 f\|_{L^2(X)} + \|[P,\chi_2]v\|_{L^2(X)})\end{equation}
But by \eqref{cvest}
\begin{equation}\label{v3}\|[P,\chi_2]v\|_{L^2(X)} = \|[P,\chi_2](1-\chi_1) v\|_{L^2(X)} \le Ch\|x^{\frac 12 + \eps}(1-\chi_1) v\|_{H^1_h(X)} \le C\|x^{-\frac12 - \eps}f\|_{L^2(X)}.\end{equation}
Plugging \eqref{v3} into \eqref{v2} and combining with \eqref{v1} gives
\begin{equation}\label{v}\|x^{\frac 12 +\eps} v\|_{H^2_h} \le C \frac {\log(1/h)}h \|x^{-\frac 12 - \eps} f\|_{L^2}. \end{equation}
Finally observe that
\[(P - \lambda - i\beta)u_1 = [P,\chi_2]u_0 = (1-\chi_1)[P,\chi_2]\chi_3 u_0,\]
so by \eqref{v}, and \eqref{u0} (the last is applicable because $\chi_3$, like $\chi_2$ has $\chi_3\chi_1 \equiv \chi_1$ and $\chi_3 W \equiv 0$),
\begin{align}\label{u1}\|x^{\frac12 + \eps}u_1\|_{H^2_h(X)} &\le C \frac {\log(1/h)}h \|[P,\chi_2]\chi_3 u_0\|_{L^2(X)} \le C \log(1/h) \|\chi_3 u_0\|_{H^1_h(X)} \notag\\
&\le C \frac {\log^2(1/h)}h \|\chi_1 f\|_{L^2(X)}.\end{align}
Plugging \eqref{u1}, \eqref{v} and \eqref{u0} into \eqref{u} gives
\begin{equation}\label{goal'} (P - \lambda - i\beta)u = f \Longrightarrow \|x^{\frac12 + \eps} u\|_{H^2_h(X)} \le C \frac {\log^2(1/h)} h \|x^{-\frac12 - \eps}f\|_{L^2(X)},\end{equation}
which is the same as \eqref{log2}.

\section{Proof of the theorem}\label{im}

To prove the theorem, we use \eqref{log2} to prove
\begin{equation}\label{two1}\|x^{\frac12 + \eps} u_1\|_{H^2_h(X)} \le C \|\chi_3 u_0\|_{H^1(X)},\end{equation}
improving \eqref{u1}. Then \eqref{u} gives the theorem.

As before we use $(P - \lambda - i\beta)u_1 = [P,\chi_2]u_0 = (1-\chi_1)[P,\chi_2]\chi_3 u_0$ combined with \eqref{cvest} to show that
\[\|x^{\frac12 + \eps} (1-\chi_1)u_1\|_{H^2_h(X)} \le C \frac 1 h \|x^{-\frac12 - \eps}[P,\chi_2]\chi_3 u_0\|_{L^2(X)} \le  C \|\chi_3 u_0\|_{H^1_h(X)}.\]
Hence \eqref{two1} would follow from
\begin{equation}\label{two}\|\chi_1 u_1\|_{H^2_h(X)} \le C \|\chi_3 u_0\|_{H^1_h(X)}.\end{equation}

We begin by taking $\wt P$ to be an operator whose symbol has nontrapping flow at energy $\lambda$, and such that  $(P - \wt P) = (P - \wt P)\chi_1$, and then $\wt u$ such that $(\wt P - \lambda - i\beta)\wt u = [P,\chi_2]u_0$. For example, we may take $\wt P = P + V$, where $V$ is a nonnegative real-valued potential such that $\chi_1 V \equiv V$, but $V > \lambda + 1$ off a small neighborhood of $\D X$ (see Lemma \ref{nontrap} for a proof that this operator is nontrapping near $\D X$). We have immediately from the nontrapping resolvent estimate of \cite[(1.1)]{vz} that
\[\|x^{\frac 12 + \eps}\wt u\|_{H^2_h(X)} \le C \frac 1h \|[P,\chi_2]u_0\|_{L^2(X)} \le C \|\chi_3u_0\|_{H^1_h(X)}\]

Because $u_0$ solves $(P-iW-\lambda-i\beta)u_0 = \chi f$, we know from \cite[Lemma A.2]{nz} that $u_0$ is outgoing i.e. has semiclassical wavefront set contained in the forward flow-out of $\Omega$, where $\Omega$ is the intersection of $T^*_{\supp (d\chi_\circ)}X^\circ$ with the forward flow-out of $\WF_h(\chi f)$. Hence $[P,\chi_2]u_0$ has this property as well, which allows us to deduce from Lemma \ref{inc} that $\pi(\WF_h\wt u) \cap \supp(\chi_1) = \varnothing,$ and hence
\begin{equation}\label{suppo} \pi(\WF_h\wt u) \cap \supp(P-\wt P) = \varnothing. \end{equation}
Now
\[(P - \lambda - i\beta) \wt u = (P-\wt P)\wt u +[P,\chi_2]u_0,\]
so we have
\[u_1 = \wt u + \wt u_1,\]
where $(P - \lambda - i\beta)\wt u_1 = (P-\wt P)\wt u$. Now by \eqref{suppo}, combined with \eqref{disj} and \eqref{empty}, $(P-\wt P)\wt u$ has empty wavefront set and hence is bounded by $\Oh(h^\infty)\|\chi_2 \wt u\|_{L^2(X)}$. Using \eqref{log2}, we conclude the same bound for $\wt u_1$. Hence we have \eqref{two}.

\section{Local smoothing}\label{so}

We now show how the resolvent estimate \eqref{est} gives us local smoothing. This follows an $AA^*$ line of reasoning which we take from \cite[Section 2.3]{bgt} and \cite[p 424]{b2}. The technique used to express the Schr\"odinger propagator in terms of the resolvent is due to Kato \cite[Lemma 3.5]{k}.

We first show how the $L^2 \to L^2$ bound \eqref{theo} implies an $L^2 \to H^2$ bound:
\begin{align*}
\|x^{\frac 12 + \eps} u&\|_{H^2(X)} = \|\Delta_g x^{\frac 12 + \eps}u\|_{L^2(X)} + \|x^{\frac 12 + \eps}u\|_{L^2(X)} \\
&\le \|(-\Delta_g - z - i \beta)x^{\frac 12 + \eps}u\|_{L^2(X)} + (z + \beta)\|x^{\frac 12 + \eps}u\|_{L^2(X)} \\
&\le \|(-\Delta_g - z - i \beta)x^{\frac 12 + \eps}u\|_{L^2(X)} + C z^{\frac 12} \log z \|x^{-\frac 12 - \eps}(-\Delta_g - z - i \beta)u\|_{L^2(X)} \\
&\le C \left(\|[x^{\frac 12 + \eps},\Delta_g]u\|_{L^2(X)} +  z^{\frac 12} \log z \|x^{-\frac 12 - \eps}(-\Delta_g - z - i\beta )u\|_{L^2(X)} \right).
\end{align*}
But$\|[x^{\frac 12 + \eps},\Delta_g]u\|_{L^2(X)} \le C \|x^{\frac 12 + \eps}u\|_{H^1(X)} \le C\nu\|x^{\frac 12 + \eps}u\|_{H^2(X)} + \frac C \nu\|x^{\frac 12 + \eps}u\|_{L^2(X)}$, so we have
\[\|x^{\frac 12 + \eps} u\|_{H^2(X)} \le C z^{\frac 12} \log z \|x^{-\frac 12 - \eps}(-\Delta_g - z -i\beta)u\|_{L^2(X)}.\]
Interpolating between the two bounds using the Riesz-Thorin-Stein Theorem gives
\begin{equation}\label{reso}\left\|x^{\frac 12 + \eps}(-\Delta_g - z \pm i\beta)^{-1} x^{\frac 12 + \eps}\right\|_{L^2(X) \to H^{1 - \eta}(X)} \le C, \qquad \eta>0, \beta \in (0,\beta_0), z \ge z_0.\end{equation}
We observe that the statement about $(-\Delta_g - z + i\beta)^{-1}$ follows from that about $(-\Delta_g - z - i\beta)^{-1}$ by taking the complex conjugate of the estimate.

Now let $A$ be the operator $L^2(X) \to L^2_tH^{\frac 12 - \eta}(X)$ which maps
\[u \mapsto \one_{[0,T]}(t)x^{\frac 12 + \eps} e^{it\Delta_g}u,\]
where $\one_{[0,T]}$ denotes the characteristic function of the interval $[0,T]$, and in our notation we suppress the dependence on the spatial variable. To prove \eqref{smooth}, we must show that $A$ is a bounded operator, or, equivalently, that
\[ AA^*\colon L^2_tH^{-\frac 12 + \eta}(X) \to L^2_tH^{\frac 12 - \eta}(X) \]
is bounded. Observe that $AA^*$ is given by
\begin{align*}AA^* f(t) &=  \one_{[0,T]}(t)x^{\frac 12 + \eps} e^{it\Delta_g}\int_{-\infty}^\infty e^{-is\Delta_g} x^{\frac 12 + \eps}  \one_{[0,T]}(s)f(s) ds.\end{align*}
However, observing that the integral is actually over $[0,T]$, and writing $\int_0^T = \int_0^t - \int_T^t$, we see that it is sufficient to prove
\[\int_0^T\left\|\int_{t_o}^t x^{\frac 12 + \eps} e^{i(t-s)\Delta_g} x^{\frac 12 + \eps}f(s) ds \right\|^2_{H^{\frac 12 - \eta}(X)}dt \le C \int_0^T \|f(t)\|^2_{H^{-\frac 12 + \eta}(X)}dt,\]
where $t_0 \in \{0,T\}$. We put $u_{t_o}(t) = \int_{t_o}^t x^{\frac 12 + \eps} e^{i(t-s)\Delta_g} x^{\frac 12 + \eps}f(s) ds$, and observe that without loss of generality we may assume $\supp f(t) \subset [0,T]$. Observe that as a result we have $\supp u_0(t) \subset [0,\infty)$, and $\supp u_T(t) \subset (-\infty,T]$. This allows us to insert factors of $e^{\pm\beta t}$ into both sides of the estimate to be proven, giving
\[\int_{-\infty}^\infty \left\|e^{-\beta t} u_0(t)\right\|^2_{H^{\frac 12 - \eta}(X)}dt \le C \int_{-\infty}^\infty \|e^{-\beta t}f(t)\|^2_{H^{-\frac 12 + \eta}(X)}dt\]
\[\int_{-\infty}^\infty \left\|e^{\beta t} u_T(t)\right\|^2_{H^{\frac 12 - \eta}(X)}dt \le C \int_{-\infty}^\infty \|e^{\beta t}f(t)\|^2_{H^{-\frac 12 + \eta}(X)}dt.\]
We use Plancherel's theorem to reformulate the two inequalities:
\[\int_{-\infty}^\infty \left\|\hat u_0(z + i \beta) \right\|^2_{H^{\frac 12 - \eta}(X)}dz \le C \int_{-\infty}^\infty \|\hat f(z + i\beta)\|^2_{H^{-\frac 12 + \eta}(X)}dz\]
\[\int_{-\infty}^\infty \left\|\hat u_T(z - i \beta) \right\|^2_{H^{\frac 12 - \eta}(X)}dz \le C \int_{-\infty}^\infty \|\hat f(z - i\beta)\|^2_{H^{-\frac 12 + \eta}(X)}dz.\]
We will prove these pointwise for each $z$: we observe that the functions $u_{t_o}(t)$ solve
\[i \D_t x^{-\frac 12 - \eps}u_{t_o}(t) + \Delta_gx^{-\frac 12 - \eps} u_{t_o}(t) = ix^{\frac 12 + \eps}f(t),\]
and so
\[\hat u_{t_o}(z \mp i \beta) = -ix^{\frac 12 + \eps}(-\Delta_g - z \pm i \beta)^{-1}x^{\frac 12 + \eps}\hat f(z \pm i \beta).\]
In other words it suffices to show that, uniformly in $z \in \R$ and for a fixed $\beta > 0$, we have
\[ \|x^{\frac 12 + \eps}(-\Delta_g - z \pm i \beta)^{-1}x^{\frac 12 + \eps}\|_{H^{\frac 12 - \eta}(X) \to H^{-\frac 12 + \eta}(X)} \le C. \]
But this follows from \eqref{reso}.

We conclude by remarking that under the additional assumption that the cutoff resolvent $x^{\frac 12 + \eps}(-\Delta_g - z \pm i\beta)^{-1}x^{\frac 12 + \eps}$ is bounded on $L^2(X)$ near $z=0$, the above argument may be repeated with $[0,T]$ replaced by $(-\infty,\infty)$ to give local smoothing for infinite time. In this case one uses a density argument, initially taking $f$ compactly supported in time, and finally taking the limit $\beta \to 0$ to obtain a bound uniform in the support of $f$. The behavior of the resolvent near zero has been studied in the case where the bilinear form $h$ in \eqref{metric} is independent of $x$ by Wang \cite{w}, and in the case where $\D X$ is $\mathbb{S}^{n-1}$ with the round metric by Guillarmou-Hassell \cite{gh}.

\textbf{Acknowledgments.} I would like to thank Maciej Zworski for suggesting this problem and for his generous help and guidance with this paper. Thanks also to Colin Guillarmou, Andr\'{a}s Vasy, Dean Baskin, Hans Christianson and Andrew Hassell for helpful discussions. I would particularly like to thank the anonymous referee for two very useful reports. Finally, I am grateful for support from NSF grant DMS-0654436 and from a Phoebe Hearst fellowship.

\end{document}